\newtheoremstyle{BBstyle0}  {}{}{\itshape}{}{\bfseries}{}{6pt}{}
\newtheoremstyle{BBstyle1}  {3pt}{3pt}{\rmfamily}{}{\itshape}{: }{3pt}{}
\newtheoremstyle{BBstyle2}  {3pt}{3pt}{\itshape}{}{\bfseries\large}{}{0pt}{}
\newtheoremstyle{BBstyle3}  {}{}{\itshape}{}{\bfseries}{: }{3pt}{}
\newtheoremstyle{BBstyle4}  {}{}{\rmfamily}{}{\bfseries}{}{6pt}{}
\newtheorem{thm}{Theorem}
\theoremstyle{definition}
\newcommand{\pa}[1]{\left({#1}\right)}
\newcommand{\norm}[1]{\left\|{#1}\right\|}
\newcommand{\cro}[1]{\left[{#1}\right]}
\newcommand{\ab}[1]{\left|{#1}\right|}
\newcommand{\E}{{\mathbb{E}}}
\renewcommand{\P}{{\mathbb{P}}}
\newcommand{\R}{{\mathbb{R}}}
\DeclareMathAlphabet{\mathscrbf}{OMS}{mdugm}{b}{n}
\newcommand{\sbP}{{\mathscrbf{P}}}
\newcommand{\cB}{{\mathcal{B}}}
\newcommand{\cJ}{{\mathcal{J}}}
\newcommand{\cR}{{\mathcal{R}}}
\newcommand{\cbE}{\boldsymbol{\mathcal{E}}}
\newcommand{\gp}{{\mathbf{p}}}
\newcommand{\gq}{{\mathbf{q}}}
\newcommand{\gx}{{\mathbf{x}}}
\newcommand{\gE}{{\mathbf{E}}}
\newcommand{\gP}{{\mathbf{P}}}
\newcommand{\gQ}{{\mathbf{Q}}}
\newcommand{\bs}[1]{\boldsymbol{#1}}
\newcommand{\bsX}{{\bs{X}}}
\newcommand{\gmu}{{\bs{\mu}}}
\newlist{lista}{enumerate}{1}
\setlist[lista,1]{label=\alph*),ref=\alph*)}
\newlist{listi}{enumerate}{1}
\setlist[listi,1]{label=(\roman*),ref=(\roman*),align=left}
\newcommand{\eref}[1]{(\ref{#1})}
\renewcommand{\ge}{\geqslant}
\renewcommand{\le}{\leqslant}
\newcommand{\1}{1\hskip-2.6pt{\rm l}}
\newcommand{\et}{^{\star}}
\begin{document}
\title{About the lower bounds for the multiple testing problem}
\author{Yannick BARAUD}
\address{Universit\'e C\^{o}te d'Azur, CNRS, LJAD, France}
\email{baraud@unice.fr}
\date{\today}
\maketitle
\begin{abstract} 
Given an observed random variable $\bsX$, consider the problem of recovering its distribution among a finite family of candidate ones. The two-point inequality, Fano's lemma and more recently an inequality due to Venkataramanan and Johnson~\citeyearpar{MR3780042} allow to bound the maximal probability of error over the family from below.  The aim of this paper is to give a very short and simple proof of all these results simultaneously and improve in passing the inequality of Venkataramanan and Johnson.
\end{abstract}

\section{Introduction}
Consider a finite family $\sbP=\{\gP_{0}=\gp_{0}\cdot\gmu,\ldots,\gP_{N}=\gp_{N}\cdot\gmu\}$ of $N+1\ge 2$ distinct probabilities dominated by a measure $\gmu$ on a measurable space $(\gE,\cbE)$ and an observation $\bsX$ drawn from an unknown  probability $\gP\et$ that belongs to $\sbP$. A very basic but important problem in Statistics is to recover $\gP\et$ among $\sbP$ with a probability of error that we wish as small as possible. Given an estimator $\widehat T(\bsX)$ with values $\cJ=\{0,\ldots,N\}$, we evaluate its maximal risk by the quantity
\[
\sup_{j\in \cJ}\P_{j}\cro{\widehat T(\bsX)\neq j}=1-\inf_{j\in \cJ}\P_{j}\cro{\widehat T(\bsX)= j}=1-\overline \cR(\widehat T,\sbP)
\]
and the minimax risk associated to the family $\sbP$ is merely
\[
\inf_{\widehat T(\bsX)}\sup_{j\in \cJ}\P_{j}\cro{\widehat T(\bsX)\neq j}=1-\sup_{\widehat T(\bsX)}\inf_{j\in \cJ}\P_{j}\cro{\widehat T(\bsX)= j}=1-\overline \cR(\sbP)
\]
where $\inf_{\widehat T(\bsX)}$ and $\sup_{\widehat T(\bsX)}$ refer respectively to the infimum and the supremum over all estimators $\widehat T(\bsX)$ with values in $\cJ$. The problem of calculating the minimax risk is a difficult task in general and in most situations only upper and lower bounds can be established. In the present paper, we focus on the problem of bounding the minimax risk  from below or equivalently that of bounding the quantity $\overline \cR(\sbP)$ from above (which is more convenient). To tackle this problem, the basic approach is to replace the infimum by a mean: for all estimators $\widehat T(\bsX)$,
\[
\overline \cR(\widehat T,\sbP)=\inf_{j\in \cJ}\P_{j}\cro{\widehat T(\bsX)=j}\le \frac{1}{N+1}\sum_{j=0}^{N}\P_{j}\cro{\widehat T(\bsX)=j}=\overline \cB(\widehat T,\sbP).
\]
so that 
\begin{equation}\label{def-B}
\overline \cR(\sbP)\le \overline \cB(\sbP)=\sup_{\widehat T(\bsX)}\overline \cB(\widehat T,\sbP).
\end{equation}
It is therefore enough to bound $\overline \cB(\sbP)$ from above, what we shall do.

When $N=1$, the problem reduces to that of testing between the two simple hypotheses $``\gP\et=\gP_{0}"$ and  $``\gP\et=\gP_{1}"$. Denoting by $\norm{\gP_{0}-\gP_{1}}$ the total variation distance between $\gP_{0}$ and $\gP_{1}$, that is
\[
\norm{\gP_{0}-\gP_{1}}=\frac{1}{2}\int_{\gE}\ab{\gp_{1}-\gp_{0}}d\gmu=\int_{\gE}\pa{\gp_{1}-\gp_{0}}_{+}d\gmu
\]
where $(x)_{+}=\max\{x,0\}$ denotes the positive part of $x$, it is well known from Le Cam~\citeyearpar{MR0334381} that 
\begin{align}
2(1-\overline \cB(\sbP))&=\inf_{\widehat T(\bsX)}\cro{\P_{0}\cro{\widehat T(\bsX)=1}+\P_{1}\cro{\widehat T(\bsX)=0}}=1-\norm{\gP_{0}-\gP_{1}}\label{eq-2pts0}
\end{align}
which implies
\begin{equation}\label{eq-2pts}
\overline \cB(\sbP)\le\frac{1}{2}\pa{1+\norm{\gP_{0}-\gP_{1}}}.
\end{equation}
Inequality~\eref{eq-2pts} (which is actually an equality) shows that the problem of discriminating correctly between the probabilities $\gP_{0}$ and $\gP_{1}$ is impossible when their total variation distance is too small. It is also well-known that the infimum in \eref{eq-2pts0} is reached for a maximum likelihood estimator, that is any random variable $\widehat T\et(\bsX)$ that satisfies 
\[
\widehat T\et(\bsX)=
\begin{cases}
1 & \quad \text{if}\quad \gp_{1}(\bsX)>\gp_{0}(\bsX)\\
0 & \quad \text{if}\quad \gp_{1}(\bsX)<\gp_{0}(\bsX).
\end{cases}
\]

For $N\ge 2$, an alternative and celebrated bound is known as Fano's Lemma. It is based on the Kullback-Leibler divergence (KL-divergence for short). We recall that the KL-divergence between the probabilities $\gP=\gp\cdot\gmu$ and $\gQ=\gq\cdot \gmu$ is defined as
\[
K(\gP,\gQ)=
\begin{cases}
\int_{\gE}\gp\log\pa{\gp/\gq}d\gmu& \text{when}\quad \gP\ll\gQ\\
+\infty  & \text{otherwise}.
\end{cases}
\] 
One the one hand, the KL-divergence has the drawback to be possibly infinite (while the total variance distance remains always bounded by 1), on the other hand it possesses good properties with respect to product measures (which is the common situation when one observes a sample $\bsX=(X_{1},\ldots,X_{n})$): if $\gP=P^{\otimes n}$ and $\gQ=P^{\otimes n}$
\[
K(\gP,\gQ)=nK(P,Q).
\]
Unfortunately there is no such connection for the total variation distance which makes it unattractive. 

The following version of Fano's Lemma can be found in Ibragimov and Has'minskii~\citeyearpar{MR620321} page 325
\begin{equation}\label{eq-Fano}
\overline \cR(\sbP)\le \frac{\widetilde K+\log 2}{\log N}\quad \text{with} \quad  \widetilde K=\frac{1}{N+1}\sum_{j=0}^{N}K\pa{\gP_{j},\overline \gP},\quad \overline \gP=\frac{1}{N+1}\sum_{j=0}^{N}\gP_{j}.
\end{equation}
The proof is not very long but requires to know some Information Theory. It is however expected that the quantity $\log N$ in the denominator could be replaced by $\log(N+1)$. This of course makes no difference for large values of $N$ but leads to a slight improvement when these values are moderate. A step in this direction is due to Lucien Birg\'e~\citeyearpar{MR2241522} who suggested this new version of Fano's lemma:
\begin{equation}\label{eq-Lucien}
\overline \cR(\sbP)\le \kappa\vee \pa{\frac{(1+1/N)\overline K}{\log (N+1)}}\quad \text{with}\quad \overline K=\frac{1}{N+1}\sum_{j=0}^{N}K(\gP_{j},\gP_{0})
\end{equation}
and $\kappa=0.7$. Pascal Massart~\citeyearpar{MR2319879} proved an analogue of Lucien Birg\'e's result with a (worse) constant $\kappa\approx 0.84$ but a (slightly) shorter proof. Both proofs are moderately long and use the variational formula of the KL-divergence.

The main advantage of \eref{eq-Lucien} lies in the fact that $\log N$ is now replaced by $\log(N+1)$ but it requires that a probability among $\{\gP_{0},\ldots,\gP_{N}\}$, named $\gP_{0}$ here, dominates all the others. 

Very recently, Venkataramanan and Johnson~\citeyearpar{MR3780042} established a new inequality that provides an alternative to Fano's Lemma. Their result is the following: for all $\lambda>0$ and all probability $\gQ=\gq\cdot\gmu$ that dominates $\sbP$,
\begin{equation}\label{eq-VJ}
\overline \cR(\sbP)\le\overline \cB(\sbP)\le \frac{C(\lambda)}{N+1}\cro{\sum_{j=0}^{N}\int_{\gE}\gp_{j}^{1+\lambda}\gq^{-\lambda}d\gmu}^{1/(1+\lambda)},
\end{equation}
with $C(\lambda)={(1+\lambda)\lambda^{-\lambda/(1+\lambda)}}\ge 1$. The proof is quite short and requires very few tools. A nice feature of their bound lies in the facts that it is possible to choose the probability $\gQ$ to make the computation as simple as possible and also the value $\lambda>0$ to optimize the bound. We refer the reader to their paper for further details on the optimality of \eref{eq-VJ}, in particular how it compares and improves Fano's lemma, and how it contextualizes in the literature on multiple testing. 

In the next section we present a very simple approach that leads to a proof of \eref{eq-2pts}, an analogue of \eref{eq-Fano} and a (slight) improvement of \eref{eq-VJ}, namely $\eref{eq-VJ}$ with $C(\lambda)=1$.

\section{Our main result}
The proof of the following result is postponed to the end of the section.
\begin{thm}\label{thm-main}
For any probability $\gQ=\gq\cdot \gmu$ that dominates $\sbP$, 
\begin{equation}\label{thm-eq1}
1\le (N+1)\overline \cB(\sbP)=\E_{\gQ}\cro{\max_{j\in\cJ}\frac{\gp_{j}}{\gq}(\bsX)}
\end{equation}
where $\E_{\gQ}$ denotes the expectation with repect to $\gQ$. Moreover, for any function $\varphi$ that is convex, non-increasing and non-negative on $\R_{+}$,
\begin{equation}\label{thm-eq2}
\varphi\pa{(N+1)\overline \cB(\sbP)}\le \sum_{j=0}^{N}\E_{\gQ}\cro{\varphi\pa{\frac{\gp_{j}}{\gq}(\bsX)}}.
\end{equation}
\end{thm}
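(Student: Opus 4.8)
The plan is to derive both displays from a single exact identity, namely that $(N+1)\overline\cB(\sbP)=\int_\gE\max_{j\in\cJ}\gp_j\,d\gmu$; once this is in hand, everything else follows by elementary manipulations.

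For \eref{thm-eq1}, I would first fix an estimator $\widehat T$ and rewrite the average of the correct-decision probabilities as a single integral. Writing $\P_j[\widehat T(\bsX)=j]=\int_\gE\1_{\{\widehat T=j\}}\gp_j\,d\gmu$ and using that the events $\{\widehat T=j\}$, $j\in\cJ$, form a partition of $\gE$, the sum $\sum_{j=0}^N\P_j[\widehat T(\bsX)=j]$ collapses to $\int_\gE\gp_{\widehat T}\,d\gmu$. Since at each point $\gx$ the integrand is one of the numbers $\gp_0(\gx),\dots,\gp_N(\gx)$, it is maximised by selecting the index achieving $\max_j\gp_j(\gx)$; as $\cJ$ is finite a measurable maximiser $\widehat T\et$ exists (break ties by smallest index), so the supremum over estimators is attained and $(N+1)\overline\cB(\sbP)=\int_\gE\max_j\gp_j\,d\gmu$. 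Passing to $\gQ$ is then immediate: on $\{\gq=0\}$ every $\gp_j$ vanishes $\gmu$-a.e.\ by domination, while on $\{\gq>0\}$ one has $\gp_j=(\gp_j/\gq)\gq$, whence $\int_\gE\max_j\gp_j\,d\gmu=\E_\gQ[\max_j(\gp_j/\gq)(\bsX)]$. The lower bound $(N+1)\overline\cB(\sbP)\ge1$ drops out at once from $\max_j\gp_j\ge\frac1{N+1}\sum_j\gp_j$ (or simply $\ge\gp_0$) upon integrating, each $\gp_j$ being a $\gmu$-density.

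For \eref{thm-eq2}, let $M=\max_j(\gp_j/\gq)(\bsX)$, so that $(N+1)\overline\cB(\sbP)=\E_\gQ[M]$ by the identity just proved. Convexity of $\varphi$ gives, through Jensen's inequality under $\gQ$, that $\varphi((N+1)\overline\cB(\sbP))=\varphi(\E_\gQ[M])\le\E_\gQ[\varphi(M)]$. The decisive step is that $\varphi$ being non-increasing turns the maximum into a minimum, $\varphi(\max_j(\gp_j/\gq))=\min_j\varphi(\gp_j/\gq)$, and finally non-negativity of $\varphi$ yields the pointwise bound $\min_j\varphi(\gp_j/\gq)\le\sum_j\varphi(\gp_j/\gq)$; taking $\E_\gQ$ and exchanging the finite sum with the expectation closes the argument. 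Note that each of the three hypotheses on $\varphi$ is used exactly once, at these three respective steps.

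I expect the only delicate points to be the two routine technicalities in \eref{thm-eq1}: the measurable selection of the maximum-likelihood estimator $\widehat T\et$ (harmless because $\cJ$ is finite and the $\gp_j$ are measurable, so an explicit tie-breaking rule produces a measurable map) and the bookkeeping on the $\gmu$-null set $\{\gq=0\}$ when changing from $\gmu$ to $\gQ$. Neither poses a genuine difficulty, so the real content lies in the clean identity of \eref{thm-eq1}, from which both its lower bound and the convexity inequality \eref{thm-eq2} follow with almost no further work.
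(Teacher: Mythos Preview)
Your proposal is correct and follows essentially the same route as the paper: establish the exact identity $(N+1)\overline\cB(\sbP)=\int_\gE\max_j\gp_j\,d\gmu$ by bounding each $\gp_j$ by the maximum and noting that the MLE attains it, then change measure to $\gQ$; for \eref{thm-eq2} apply Jensen, commute $\varphi$ with the maximum via monotonicity, and bound by the sum using non-negativity.

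One small point worth flagging: the hypothesis ``non-increasing'' in the statement is a typo for ``non-decreasing'' (every application in the paper uses a non-decreasing $\varphi$, and the paper's own proof writes $\varphi(\max_j\,\cdot\,)=\max_j\varphi(\cdot)$, which requires non-decreasing). You took the statement literally and wrote $\varphi(\max_j\,\cdot\,)=\min_j\varphi(\cdot)$; that is correct under ``non-increasing'' and still yields \eref{thm-eq2}, but be aware that the intended hypothesis is the opposite monotonicity.
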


When $N=1$, choose $\varphi(u)=(u-1)_{+}$ and $\gQ=(\gP_{0}+\gP_{1})/2$ that fulfil the assumptions of our theorem. Inequalities~\eref{thm-eq1} and \eref{thm-eq2} immediately leads to 
\[
2\overline \cB(\sbP)-1\le \norm{\gP_{0}-\gQ}+ \norm{\gP_{1}-\gQ}= \norm{\gP_{1}-\gP_{0}}
\]
and gives~\eref{eq-2pts}.

For $N\ge 2$, choose 
\[
\varphi=\overline\varphi\1_{[1,+\infty)}\quad \text{with}\quad \overline\varphi(u)=u\log u-u+1\quad \text{for all $u\ge 0$.}
\]
It is easy to check that $\overline\varphi$ is non-negative, convex and satisfies $\varphi(1)=0$, hence $\varphi$ is 
convex, non-decreasing and non-negative as required. Since $\varphi\le \overline \varphi$, 
\[
\E_{\gQ}\cro{\varphi\pa{\frac{\gp_{j}}{\gq}(\bsX)}}\le \E_{\gQ}\cro{\overline \varphi\pa{\frac{\gp_{j}}{\gq}(\bsX)}}=K(\gP_{j},\gQ)\quad \text{for all $j\in\cJ$}
\]
and any $\gQ$ that dominates $\sbP$. Using~\eref{thm-eq1} we derive that
\begin{align*}
\varphi\pa{(N+1)\overline \cB(\sbP)}&=\overline \varphi\pa{(N+1)\overline \cB(\sbP)}\\
&=(N+1)\cro{\overline \cB(\sbP)\log(N+1)+\overline \varphi\pa{\overline \cB(\sbP)}}-N\\
&\ge (N+1)\overline \cB(\sbP)\log(N+1)-N.
\end{align*}
Taking $\gQ=\overline \gP$, we deduce from Inequality~\eref{thm-eq2} this analogue of Fano's Lemma \eref{eq-Fano}:
\[
\overline \cB(\sbP)\le \frac{\widetilde K+N/(N+1)}{\log(N+1)}\le \frac{\widetilde K+1}{\log(N+1)}.
\]

Finally taking for $u,\lambda\ge 0$, $\varphi(u)=u^{1+\lambda}$ we deduce from~\eref{thm-eq2} that
\[
\pa{(N+1)\overline \cB(\sbP)}^{1+\lambda}\le \sum_{j=1}^{N}\int_{\gE}\gp_{j}^{1+\lambda}\gq^{-\lambda}d\gmu,
\]
for any probability $\gQ=\gq\cdot \gmu$ that dominates $\sbP$, which implies~\eref{eq-VJ} with $C(\lambda)=1$.

\begin{proof}[Proof of Theorem~\ref{thm-main}]
Obviously $\gp_{j}\le \max_{j'\in\cJ} \gp_{j'}$ for all $j\in\cJ$. Hence, for all estimators $\widehat T(\bsX)$ with values in $\cJ=\{0,\ldots,N\}$, 
\begin{align*}
\overline \cB(\widehat T,\sbP)&=\frac{1}{N+1}\sum_{j=0}^{N}\gP_{j}\cro{\widehat T(\bsX)=j}= \frac{1}{N+1}\sum_{j=0}^{N}\int_{\gE}\1_{\{\widehat T(\gx)=j\}}\gp_{j}(\gx)d\gmu(\gx)\\
&\le \frac{1}{N+1}\sum_{j=0}^{N}\int_{\gE}\1_{\{\widehat T(\gx)=j\}}\max_{j\in\cJ}\gp_{j}(\gx) d\gmu(\gx)=\frac{1}{N+1}\int_{\gE}\max_{j\in\cJ}\gp_{j}(\gx) d\gmu(\gx).
\end{align*}
This upper bound is sharp and achieved for any maximum likelihood estimator, namely any estimator $\widehat T$ that satisfies 
\[
\gp_{\widehat T\et(\gx)}(\gx)=\max_{j\in\cJ}\gp_{j}(\gx)\quad \text{for $\gmu$-almost all $\gx\in\gE$.}
\]
Consequently, for any probability $\gQ=\gq\cdot \gmu$ that dominates $\sbP$,
\[
(N+1)\overline \cB(\sbP)=\int_{\gE}\max_{j\in\cJ}\gp_{j}(\gx) d\gmu(\gx)=\int_{\gE}\max_{j\in\cJ}\frac{\gp_{j}}{\gq}(\gx) \gq(\gx)d\gmu(\gx)=\E_{\gQ}\cro{\max_{j\in\cJ}\frac{\gp_{j}}{\gq}(\bsX)}
\]
which proves \eref{thm-eq1}. Finally, the properties of the function $\varphi$ (together with Jensen's inequality), lead to the following series of inequalities 
\begin{align*}
\varphi\pa{(N+1)\overline \cB(\sbP)}&= \varphi\pa{\E_{\gQ}\cro{\max_{j\in\cJ}\frac{\gp_{j}}{\gq}(\bsX) }}\le \E_{\gQ}\cro{\varphi\pa{\max_{j\in\cJ}\frac{\gp_{j}}{\gq}(\bsX) }}\\
&=\E_{\gQ}\cro{\max_{j\in\cJ}\varphi\pa{\frac{\gp_{j}}{\gq}(\bsX) }}\le \sum_{j=0}^{N+1}\E_{\gQ}\cro{\varphi\pa{\frac{\gp_{j}}{\gq}(\bsX) }},
\end{align*}
and finally to~\eref{thm-eq2}.
\end{proof}

\bibliographystyle{apalike}

\end{document}